\newtheorem{mythe}{Theorem}
\newtheorem{mylem}{Lemma}
\newtheorem{mypro}{Proposition}
\title{A choiceless box game paradox}
\author{Elliot Glazer}
\begin{document}

\maketitle

\noindent \textbf{Abstract}

We identify a choiceless variation of the box game paradox, in which players predict unknown real numbers with near-perfect accuracy despite lacking
any useful information. We also verify that choice is necessary in the solution of the original problem and consider some further variations of the game.

\bigskip

\noindent \textbf{1. Introduction}

The axiom of choice has many implications which conflict with intuitions about randomness, particularly the belief that one cannot predict a number they
know nothing about. Two striking examples have spread over multiple internet forums: the infinite hat problem of Gabay and O'Connor ([1], [4]), and its spiritual
successor the box game problem,\footnote{I have not been able to identify who originally came up with box game problems. The earliest reference I could find is
[3]. See also [6] and [7].} which we recount here.

A team of 100 players wait outside a room in which there is a countably infinite sequence of boxes, each containing a real number.
One at a time, each player will enter the room and open as many boxes as they like, even
infinitely many, but must eventually guess the value within some unopened box. They then close the boxes and leave. There is no communication between the players once the game begins. The team wins if at least 99 of them make a correct guess. Paradoxically, there is a strategy which ensures the team wins,
suggesting each player has a ``99\% chance" of guessing a real number of which they have no information.

Here is one solution to this puzzle. For sequences $s, t \in \mathbb{R}^{\mathbb{N}},$ denote $s \sim_{\text{tail}} t$ if they have the same tail, i.e.,
there is $n$ such that for all $m \ge n,$ $s(m)=t(m).$ Then $\sim_{\text{tail}}$ is an equivalence relation. Choose one sequence from each equivalence class.

Let $v \in \mathbb{R}^{\mathbb{N}}$ be the sequence of reals in the boxes. For $p \in [100],$ define $s_p \in \mathbb{R}^{\mathbb{N}}$ by
$s_p(i)=v(100i+p)$ and let $\hat{s}_p$ be the $\sim_{\text{tail}}$ class representative for $s_p.$ Let $i_p \in \mathbb{N}$ be least such that
$s_p(i) = \hat{s}_p(i)$ for all $i \ge i_p$ and let $k_p= \max(\{i_n: n \in [100], n \neq p\}).$

We now describe the strategy of player $p.$ First, open all boxes except those congruent to $p$ mod 100 in order to
determine $s_n,$ $\hat{s}_n,$ and $i_n$ for all $n \neq p,$ as well as $k_p.$ Now open all remaining boxes except for
box $100k_p+p$ in order to determine the sequence $\hat{s}_p.$ Finally, guess that the value in box $100k_p+p$ is $\hat{s}_p(k_p).$ This
strategy can only fail for a $p$ which strictly maximizes $i_p,$ so at least 99 players succeed.

Due to the use of choice in picking representatives from the equivalence classes, we have not provided an explicit winning strategy
for this box game. In fact, we will show in Section 3 that choice is necessary to prove the existence of a winning strategy. However, in Section 4 we will
identify a variation of the box game which has an explicit winning strategy, simply by increasing the number of boxes in the room.

\bigskip

\noindent \textbf{2. Notation}

We will consider several versions of the box game, which vary in the sets of players, boxes, and the possible values in the boxes, but all having the win
condition that at most one player fails to make a correct guess (either by guessing wrong or by opening all boxes without making a guess).

Let $G(P, B, V)$ be the variant of the box game with set of players $P,$ set of boxes $B,$ and $V$ the set of possible values in the boxes (e.g.,
the original box game is $G([100], \mathbb{N}, \mathbb{R})$). For each player, a move consists either of (i) opening up a subset of the boxes, or of (ii)
choosing an ordered pair $(b, v),$ which denotes guessing box $b$ has value $v$ in it. A player may make a transfinite sequence of moves of type
(i) before making their guess, though the strategies we construct will involve only a bounded finite sequence of moves.

We denote the team's collective strategy by $\sigma$ and the strategy of player $p$ by $\sigma_p.$ The inputs for $\sigma_p$ are the history of moves
the player has made and the values in the boxes they have seen, and the output is the player's next move.

We say $G(P, B, V)$ is \textit{winnable} if there a strategy $\sigma$ which ensures at most
one player makes an incorrect guess. Notice that only the cardinalities of $B,$ $P,$ and $V$ are relevant to whether $G(P, B, V)$ is winnable.

\bigskip

\noindent \textbf{3. Choice is necessary}

We will first show that some amount of choice is necessary to prove existence of winning strategies for the original box game by verifying this for an
easier game:

\begin{mythe} It is consistent relative to ZF set theory that $G([3], \mathbb{N}, \{0,1\})$ is not winnable.\end{mythe}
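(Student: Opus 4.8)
The plan is to pass to a model of $\mathrm{ZF}$ — in fact of $\mathrm{ZF}+\mathrm{DC}$ — in which every set of reals has the property of Baire; such a model exists relative to $\mathrm{Con}(\mathrm{ZF})$ by a theorem of Shelah, and it suffices to show $G([3],\mathbb{N},\{0,1\})$ is unwinnable there. Identify the contents with a point $v\in2^{\mathbb{N}}$; given a team strategy $\sigma$, let $W_p\subseteq2^{\mathbb{N}}$ be the set of contents on which player $p$ guesses correctly and $D_p=2^{\mathbb{N}}\setminus W_p$ the set on which player $p$ fails (guesses wrongly, or never guesses). The win condition says precisely that $\sigma$ is winning iff $D_1,D_2,D_3$ are pairwise disjoint. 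Writing $O_p(v)$ for the set of boxes player $p$ opens before guessing and $b_p(v),w_p(v)$ for the guessed box and value, the whole course of $p$'s play is determined by $v\restriction O_p(v)$, and $b_p(v)\notin O_p(v)$; hence if $v,v'$ agree off the single coordinate $b_p(v)$ then $p$ plays identically, so toggling that coordinate turns a success into a failure and vice versa. All of $W_p,D_p$ and the auxiliary sets below have the property of Baire by hypothesis.

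The technical core is a Kuratowski–Ulam computation along the guessed coordinate. For a box $\beta$ let $G_p^{\beta}$ be the set of contents on which $p$ guesses box $\beta$; by the invisibility-of-the-flip just noted, $G_p^{\beta}$ does not depend on coordinate $\beta$, so under $2^{\mathbb{N}}\cong2^{\{\beta\}}\times2^{\mathbb{N}\setminus\{\beta\}}$ it is a cylinder $2^{\{\beta\}}\times E_p^{\beta}$, while $W_p\cap G_p^{\beta}$ is the graph of a (Baire-measurable) function $g_p^{\beta}\colon E_p^{\beta}\to2^{\{\beta\}}$ and $D_p\cap G_p^{\beta}$ the graph of $1-g_p^{\beta}$. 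Call a nonempty basic clopen set $[s]=\{v:v\supseteq s\}$ ($s$ finite) \emph{$p$-open} if no content in $[s]$ has $p$ guess a box in $\dom(s)$. I claim: if $[s]$ is $p$-open, then $D_p\cap[s]$ is non-meager, and moreover $D_p$ is comeager in some basic $[\rho]\subseteq[s]$ whenever $W_p$ is comeager in $[s]$. Indeed, for any $\beta$ with $G_p^{\beta}\cap[s]\neq\varnothing$ we have $\beta\notin\dom(s)$, the two fibres of the graph $W_p\cap G_p^{\beta}\cap[s]$ over the two points of $2^{\{\beta\}}$ partition $E_p^{\beta}\cap[s]'$, and Kuratowski–Ulam gives that $W_p\cap[s]$ and $D_p\cap[s]$ are both non-meager; if $W_p$ is comeager in $[s]$ then some fibre $\{y:g_p^{\beta}(y)=c\}$ is comeager on a sub-basic $[r]'$, so on $[\rho]:=\{1-c\}_{\beta}\times[r]'\subseteq[s]$ player $p$ blindly guesses $c$ on a comeager set while the truth is $1-c$, i.e. $D_p$ is comeager there. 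If instead $G_p^{\beta}\cap[s]$ is meager for every $\beta$, then $G_p\cap[s]$ is a countable union of meager sets, so $p$ generically never guesses and $D_p$ is again comeager in $[s]$. (A player who never guesses only hurts the team, so this case is folded into the $D_p$'s rather than assumed away.)

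To finish, suppose $\sigma$ is winning. If some $W_p$ were meager, the other two failure sets, being disjoint from the comeager $D_p$, would be meager, contradicting the claim applied with $[s]=2^{\mathbb{N}}$ (which is $q$-open for every $q$); so each $W_p$ is non-meager. Fix a nonempty basic $[s^*]$ on which $D_1$ is comeager with $\lvert\dom(s^*)\rvert$ least possible. Then $D_2,D_3$ are meager in $[s^*]$, so $W_2,W_3$ are comeager there, and the claim (together with the graph computation) forces that whenever player $2$ or $3$ succeeds in $[s^*]$ it is by guessing a box $\beta\in\dom(s^*)$ and guessing its value $s^*(\beta)$: a box outside $\dom(s^*)$ is unopened and its value is unpinned on $[s^*]$, so by Kuratowski–Ulam no such blind guess is generically correct on $[s^*]$, while a blind guess that were generically wrong on a sub-basic set would make $D_2$ or $D_3$ comeager there, contradicting $D_1$ comeager. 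The last move is combinatorial: $\dom(s^*)$ is finite, and $G_2\cup G_3=2^{\mathbb{N}}$ (as $D_2\cap D_3=\varnothing$), so $[s^*]$ is covered modulo meager by the finitely many sets $(G_2^{\beta}\cup G_3^{\beta})\cap[s^*]$, $\beta\in\dom(s^*)$; picking $\beta_0$ realized on a sub-open and using minimality of $\dom(s^*)$ to locate a slice where $D_1$ is not comeager, one flips $\beta_0$ and the remaining coordinates of $\dom(s^*)$ to trap two of the three players into failing on a common sub-basic set, contradicting pairwise disjointness.

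The main obstacle is exactly this last step: organizing the interplay of the three players — in particular tracking which boxes each has opened, so that a coordinate flip trapping one player does not alter another's behaviour, and excluding the degenerate configurations in which two failure sets are complementary clopen halves of $2^{\mathbb{N}}$ (which is precisely where both the ``at most one failure'' rule and the presence of a \emph{third} player are needed). The heuristic guiding all of this is measure-theoretic: the flip argument gives $\mu(D_p)\ge\tfrac12$ outright, and then $\mu(D_1)+\mu(D_2)+\mu(D_3)\le1$ is an immediate contradiction — but carrying this out genuinely requires a model of $\mathrm{ZF}$ in which enough sets of reals are Lebesgue measurable, which by Shelah's results is strictly stronger than $\mathrm{Con}(\mathrm{ZF})$, so the Baire-category route above is the one appropriate to the stated theorem.
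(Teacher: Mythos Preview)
Your proposal is incomplete: the final combinatorial step --- where you ``flip $\beta_0$ and the remaining coordinates of $\dom(s^*)$ to trap two of the three players'' --- is not actually carried out, and you yourself label it ``the main obstacle.'' As written this is a sketch of a programme, not a proof. The endgame is precisely where the difficulty lies: Baire category has no additivity, so three pairwise disjoint non-meager failure sets are not by themselves contradictory, and your local flipping must be organized carefully enough to manufacture a single basic open set on which two of the $D_p$ are simultaneously comeager. That organization is missing, and the paragraph you devote to it (``tracking which boxes each has opened \ldots excluding the degenerate configurations'') is a list of issues rather than a resolution of them.

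Separately, your stated reason for taking the category route rests on a misconception. You write that the measure heuristic ``requires a model of $\mathrm{ZF}$ in which enough sets of reals are Lebesgue measurable, which by Shelah's results is strictly stronger than $\mathrm{Con}(\mathrm{ZF})$.'' The paper does not assume all sets are Lebesgue measurable; it assumes only the existence of a total translation-invariant probability measure on the unit interval, and asserts (citing Tomkowicz--Wagon) that this weaker hypothesis is consistent relative to ZF with no large-cardinal input. Under that hypothesis the argument is immediate: bit-flip invariance gives $\mu(E_1)=\mu(E_2)$ for each player, so each fails with probability at least $\tfrac12$, and linearity of expectation forces some configuration with at least two failures. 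Your category argument, even if it can be completed, is working much harder to avoid a cost the paper's approach does not pay.
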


We assume there is a total translation-invariant probability measure $\mu$ on the unit interval, a hypothesis which is consistent relative to ZF.\footnote{Relative
to an inaccessible cardinal, it is consistent that all sets of reals are Lebesgue measurable. We adopt a weaker hypothesis that avoids this large cardinal
assumption, cf. [5, 15.1].}

Fix a player $p$ and their strategy $\sigma_p.$ Choose a real $r$ in the interval at random and use its binary expansion as the sequence of bits in the boxes.
Let $E_1$ be the event that $p$ guesses correctly and $E_2$ that $p$ guesses wrong.\footnote{Note that $E_1$ and $E_2$ need not be
exhaustive since $p$ can fail by opening all boxes without guessing.}
We will show $\mu(E_1)=\mu(E_2).$

Let $n(p)$ be the box $p$ guesses from and $g(p)$ be the value they guess to be in that box.
If $p$ opens all boxes without making a guess, we set $n(p)=g(p)=-1.$ Let $v(n)$ be the value in box $n,$ i.e. the $n$th bit of $r.$ Then

\begin{align*}
\mu(E_1) &= \sum_{n \in \mathbb{N}} \mu(n=n(p), g(p)=0, v(n)=0)
+\sum_{n \in \mathbb{N}} \mu(n=n(p), g(p)=1, v(n)=1)
\\&= \sum_{n \in \mathbb{N}} \mu(n=n(p), g(p)=0, v(n)=1)
+\sum_{n \in \mathbb{N}} \mu(n=n(p), g(p)=1, v(n)=0)
\\&=\mu(E_2).
\end{align*}

Therefore, $\mu(E_1) \le \frac{1}{2}.$ Thus, the expected number of players to fail is at least $\frac{3}{2},$
so there exists a box sequence for which at least 2 players fail.

\bigskip

\noindent \textbf{4. A choiceless box game}

Our main result concerns the box game $G(\mathbb{N}, \mathcal{P}(\mathbb{R}), \mathbb{R}).$
This version of the game appears harder than the original, since now infinitely many players have to guess a real number without acquiring any
information about it, and still only one player is allowed to fail. That there are more boxes in the room doesn't seem helpful, yet we can now construct an explicit winning strategy:

\begin{mythe} (ZF) The box game $G(\mathbb{N}, \mathcal{P}(\mathbb{R}), \mathbb{R})$ is winnable.
\footnote{A reader interested in reverse mathematics may check that the theorem can be formalized and proven in $Z_3P,$
third-order arithmetic with an additional unary predicate in the language and the comprehension scheme extended to include formulae involving this
predicate. The predicate encodes the reals inside the boxes. Cf. [2] for how to formalize the transfinite recursion in Lemma 1 in
this framework.}
\end{mythe}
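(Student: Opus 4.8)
The plan is to mimic, in $\mathsf{ZF}$, the solution recalled in Section~1: assign to each player an ordinal ``rank,'' have the players race on these ranks so that at most one can be a strict maximizer (hence at most one fails), and guess accordingly. The only non-constructive ingredient of the Section~1 strategy is the choice of representatives for the tail classes of $\mathbb{R}^{\mathbb{N}}$; the surplus of boxes is what will let us replace this by a $\mathsf{ZF}$-definable gadget, built by transfinite recursion driven by the box contents themselves. First I would invoke the remark from Section~2 that only the cardinalities of $P$, $B$, $V$ matter, and fix once and for all a $\mathsf{ZF}$-definable bijection re-presenting the box set: since $|\mathcal{P}(\mathbb{R})| = |\mathbb{N}\times\mathcal{P}(\mathbb{R})|$ and $\mathcal{P}(\mathbb{R})$ is definably in bijection with a set of pairs $(W,x)$ with $W$ a well-order of a subset of $\mathbb{R}$ and $x$ an auxiliary parameter (with room to spare), I would regard the boxes as partitioned into countably many \emph{blocks} $\mathcal{B}_0,\mathcal{B}_1,\dots$, one per player, each $\mathcal{B}_p$ again of size $|\mathcal{P}(\mathbb{R})|$ and stratified internally into \emph{levels} indexed by ordinals below $\aleph(\mathcal{P}(\mathbb{R}))$ (many boxes per level), plus spare boxes for bookkeeping. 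Player $p$ will ultimately guess a box of $\mathcal{B}_p$.

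The heart is a transfinite recursion on the contents $w$ of a single block (this is what I expect the forthcoming Lemma~1 to provide). Given $w$, define by recursion on ordinals a strictly $\subseteq$-increasing --- hence repetition-free --- sequence of ``configurations,'' each itself a well-order of a subset of $\mathbb{R}$, where the passage from stage $\alpha$ to stage $\alpha+1$ is dictated by the value of $w$ at the box coded by the stage-$\alpha$ configuration, and one takes unions at limits. Because the configurations are pairwise distinct they inject into $\mathcal{P}(\mathbb{R})$, so --- with no appeal to choice --- the recursion must halt at some ordinal $\gamma(w)<\aleph(\mathcal{P}(\mathbb{R}))$; this is the \emph{rank} of the block. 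Crucially, the recursion is to be engineered so that halting ``closes off'' the configuration in a way that forces the value $w$ takes on certain not-yet-queried boxes of the block lying on levels $\ge\gamma(w)$; the degenerate case of a (nearly) constant block is the trivial instance, where the recursion halts at stage $0$ and the forced value is the constant. Note that $\gamma(w)$ and the forced values depend only on the block's contents, so different players computing them agree.

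Granting this, the strategy is the Section~1 race. Player $p$ opens every box in every block $\mathcal{B}_n$ with $n\ne p$ (the rules permit opening arbitrarily, even transfinitely, many boxes), runs the recursion on each such $w_n$, and so computes every rank $\gamma_n$ with $n\ne p$; let $k_p:=\sup_{n\ne p}\gamma_n$, detecting also whether this supremum is attained. Player $p$ then turns to $\mathcal{B}_p$: running the recursion there (a transfinite sequence of type-(i) moves) and opening everything it queries, then opening all remaining boxes of $\mathcal{B}_p$ except a single box $E_p$ on level $k_p$ among the boxes the recursion forces, player $p$ guesses the forced value of $w_p(E_p)$. If $\{\gamma_n:n\ne p\}$ is unbounded, player $p$ instead fixes some $n^\ast\ne p$ with $\gamma_{n^\ast}\ge\gamma_p$ and performs the analogous guess on level $\gamma_{n^\ast}$; in that case no rank is strictly maximal, so this is safe. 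In general player $p$'s guess is correct provided $k_p\ge\gamma_p$, i.e.\ provided $\gamma_p$ is \emph{not} strictly larger than $\gamma_n$ for every $n\ne p$. Since at most one player can be the strict maximizer of $n\mapsto\gamma_n$, at most one player fails and the team wins.

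The main obstacle is the engineering in the second paragraph: arranging that ``the recursion halts at $\gamma$'' genuinely forces the relevant values of $w$ on unqueried boxes of level $\ge\gamma$, against an adversary who controls $w$ everywhere. This cannot hold unconditionally --- a single player can never win, consistent with the phenomenon of Section~3 --- so the forced values must be exactly those a player cannot individually certify, while still being pinned down once one knows the rank is not strictly maximal; reconciling these two requirements is the delicate point. The leverage is precisely that $|\mathcal{P}(\mathbb{R})|$ exceeds the continuum: to keep the recursion from halting with a correct forced value, the adversary would have to realize more pairwise-distinct configurations than inject into $\mathcal{P}(\mathbb{R})$, which is impossible. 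Making this counting and the closing-off precise --- and, as the footnote indicates, checking that the recursion and the verification go through in $\mathsf{ZF}$, indeed already in $Z_3$ with the predicate for the box contents (following [2] for the transfinite recursion) --- is the substance of the proof; the cardinality bookkeeping and the race itself are routine.
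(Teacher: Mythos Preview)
Your proposal has the right skeleton --- exploit the surplus of boxes via a transfinite recursion, then run a race in which at most one player can be the strict maximizer --- but there is a genuine gap at precisely the point you yourself flag as ``delicate.'' You never specify any mechanism by which halting of the recursion \emph{forces} the value of an unqueried box, and as you have set things up no such mechanism can exist: the adversary fills all boxes freely, so information about some boxes cannot determine the content of an independent one. What is actually needed is not a ``forced value'' but a definable \emph{relationship between two distinct boxes}, so that opening one tells you the other. Your recursion (strictly $\subseteq$-increasing configurations, one box queried per successor step, halting at some ordinal $\gamma(w)$) produces no such relationship, and racing on the ordinal halting times $\gamma_n$ is disconnected from any box contents; there is also no reason the ``degenerate'' case you cite (nearly constant $w$) should halt at stage $0$ under your description.

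The paper's mechanism is different and concrete. The transfinite recursion (Lemma~1) is used not to assign a rank to a block but to manufacture a \emph{collision}: for any $f:\mathcal{P}(\mathbb{R})\to\mathbb{R}/{\sim}$ one sets $X_\alpha=\bigcup_{\beta<\alpha}f(X_\beta)$ and reads off the least $\alpha$ with $f(X_\alpha)=f(X_\beta)$ for some $\beta<\alpha$. Applied with $\sim$ the tail (resp.\ column) equivalence, this yields explicit $X\neq Y$ such that the box-value sequences indexed by $X$ and by $Y$ agree beyond some \emph{natural-number} index. Player $p$ then opens the box $(X,p,k_p)$ and guesses that box $(Y,p,k_p)$ contains the same value. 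The race is on those natural-number agreement indices (the $i_n$ and $j_n$), not on the ordinal length of the recursion, which is irrelevant once the collision is found. In short, the missing idea in your sketch is this collision-then-copy step: the recursion does not force a value, it locates two box-addresses whose columns are provably tail-equivalent, and the guess is literally the value just read from the partner box.
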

We begin by identifying the set of boxes with $\mathcal{P}(\mathbb{R}) \times \mathbb{N} \times \mathbb{N},$ and the values hidden in the boxes by a
function $v: \mathcal{P}(\mathbb{R}) \times \mathbb{N} \times \mathbb{N} \rightarrow \mathbb{R}.$
For $h_i \in \mathbb{R}^{\mathbb{N} \times \mathbb{N}},$ define $h_1 \sim_{\text{col}}h_2$ if they agree on all but finitely many columns, i.e., there is
$n$ such that for all $i \ge n$ and $j \in \mathbb{N},$ $h_1(i, j)=h_2(i, j).$ Notice $\sim_{\text{col}}$ and $\sim_{\text{tail}}$ are each an equivalence
relation on a set in bijection with $\mathbb{R}.$

We now isolate a lemma which involves our only use of transfinite recursion:

\begin{mylem}
Let $\sim$ be an equivalence relation on $\mathbb{R}.$ There is an explicit map which sends every
$f: \mathcal{P}(\mathbb{R}) \rightarrow \mathbb{R}/\sim$ to an $(X, Y)$ such that $X \neq Y$ and $f(X)=f(Y).$
\end{mylem}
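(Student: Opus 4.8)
The plan is to produce the pair by an explicit transfinite recursion rather than by a cardinality count followed by a choice. A count does show $f$ is non-injective: $\mathbb{R}/{\sim}$ is a surjective image of $\mathbb{R}$, so $\mathcal{P}(\mathbb{R}/{\sim})$ injects into $\mathcal{P}(\mathbb{R})$ and hence, by Cantor, $\mathbb{R}/{\sim}$ admits no injection from $\mathcal{P}(\mathbb{R})$. But $\mathbb{R}/{\sim}$ need not be well-orderable, so one cannot well-order it and simply pick a value that is hit twice. Instead I would greedily build subsets of $\mathbb{R}$ whose $f$-values stay pairwise distinct for as long as the recursion can run, using the $\sim$-classes themselves (each regarded as its underlying subset of $\mathbb{R}$) as the building blocks, and then read the desired pair off the stage at which the recursion is forced to halt.

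In detail: identify each member of $\mathbb{R}/{\sim}$ with the corresponding subset of $\mathbb{R}$, and define by transfinite recursion a sequence $\langle c_\alpha \rangle$ of $\sim$-classes by setting $S_\alpha = \bigcup_{\beta < \alpha} c_\beta$ and then $c_\alpha = f(S_\alpha)$ if $f(S_\alpha) \notin \{ c_\beta : \beta < \alpha \}$, and halting otherwise. (Thus $S_0 = \emptyset$ and $c_0 = f(\emptyset)$, and in general $S_\alpha$ is the union of all the classes used so far.) While they are defined the $c_\alpha$ are pairwise distinct by construction, so the recursion cannot run through all the ordinals: that would be an injection of a proper class into the set $\mathbb{R}/{\sim}$, i.e.\ it would exceed the least ordinal admitting no injection into $\mathbb{R}/{\sim}$ (the Hartogs number of $\mathbb{R}/{\sim}$), contradicting Replacement. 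Hence the recursion halts at some ordinal $\theta$, and there, by the halting clause, $f(S_\theta) = c_{\beta^*}$ for some $\beta^* < \theta$ --- necessarily a unique such $\beta^*$, since the $c_\beta$ are distinct.

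I would then output $(X, Y) = (S_\theta, S_{\beta^*})$. By construction $f(X) = f(S_\theta) = c_{\beta^*} = f(S_{\beta^*}) = f(Y)$, so it remains only to check $S_\theta \neq S_{\beta^*}$. For that, observe that $S_\theta$ contains a $\sim$-class that is not a subset of $S_{\beta^*} = \bigcup_{\beta < \beta^*} c_\beta$: if $\theta = \beta^* + 1$ use the class $c_{\beta^*}$ itself, and if $\theta > \beta^* + 1$ use $c_{\beta^* + 1}$ (which is defined, the recursion having reached stage $\theta$). In either case this class is nonempty and, being distinct from every $c_\beta$ with $\beta < \beta^*$, is disjoint from all of them, so it is not contained in $S_{\beta^*}$; hence $S_\theta \setminus S_{\beta^*} \neq \emptyset$. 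Since $\theta$, $\beta^*$, $S_\theta$, and $S_{\beta^*}$ are each pinned down outright by $f$ and $\sim$ with no choices made along the way, $f \mapsto (S_\theta, S_{\beta^*})$ is the required explicit map.

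The step I expect to take the most care is the halting argument: it must be phrased so that it uses only Replacement --- that a transfinite recursion cannot define an injective class function from the ordinals into a set, equivalently that one is bound to exhaust the Hartogs number of $\mathbb{R}/{\sim}$ --- and nothing resembling choice. The only other point that is not pure bookkeeping is the small case split ($\theta = \beta^* + 1$ versus $\theta > \beta^* + 1$) in the disjointness argument; everything else (distinct $\sim$-classes are disjoint and nonempty, the recursion is well-defined) is routine. For the reverse-mathematics footnote, one checks that this is precisely the kind of ordinal recursion --- with ordinals coded by well-orderings of reals --- that the cited formalization is designed to carry out.
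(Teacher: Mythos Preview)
Your proposal is correct and is essentially the paper's proof: your recursion $S_\alpha = \bigcup_{\beta<\alpha} c_\beta = \bigcup_{\beta<\alpha} f(S_\beta)$ is exactly the paper's $X_\alpha = \bigcup_{\beta<\alpha} f(X_\beta)$, and your halting stage $\theta$ is its stabilization point. You simply fill in details the paper leaves implicit---the Hartogs/Replacement halting argument and the check that $S_\theta \neq S_{\beta^*}$ (where, incidentally, your case split is unnecessary: $c_{\beta^*}\subset S_\theta$ and $c_{\beta^*}\cap S_{\beta^*}=\emptyset$ already in both cases).
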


\begin{proof}
By transfinite recursion, define $X_{\alpha}=\bigcup_{\beta<\alpha} f(X_{\beta}).$ This sequence stabilizes precisely
at the least $\alpha$ such that $f(X_{\alpha})=f(X_{\beta})$ for some $\beta<\alpha,$ so $X=X_{\alpha}$ and $Y=X_{\beta}$ are as desired.
\end{proof}

Define $f: \mathcal{P}(\mathbb{R}) \rightarrow (\mathbb{R}^{\mathbb{N} \times \mathbb{N}})/\sim_{\text{col}}$ by
$f(X) = [(n, i) \mapsto v(X, n, i)]_{\sim_{\text{col}}}$ and $f_n: \mathcal{P}(\mathbb{R}) \rightarrow (\mathbb{R}^{\mathbb{N}})/\sim_{\text{tail}}$
by $f_n(X) = [i \mapsto v(X, n, i)]_{\sim_{\text{tail}}}.$ Let $(X^*, Y^*)$ be the canonical pair (as in Lemma 1) such that $f(X^*)=f(Y^*)$ and
$(X_n, Y_n)$ the canonical pair such that $f_n(X_n) = f_n(Y_n).$ For each $n,$ if there is $i$ such that $v(X^*, n, i') = v(Y^*, n, i')$ for all $i' \ge i,$ let $i_n$ be least such,
and otherwise set $i_n=-1.$ Let $J=\{n: i_n = -1\}$ and let $j_n$ be least such that for all $j \ge j_n,$ $v(X_n, n, j) = v(Y_n, n, j).$ 
For cofinitely many $n,$ $i_n = 1,$ so for each $p \in \mathbb{N}$ we can define
$$k_p = \max(\{i_n: n \neq p\} \cup \{j_n: n \in J \setminus \{p\}\}).$$

We now describe the strategy $\sigma_p$ for player $p.$
First, open all boxes in $\mathcal{P}(\mathbb{R}) \times (\mathbb{N} \setminus \{p\}) \times \mathbb{N}.$ For $n \neq p,$ this determines $f,$
$f_n,$ $(X^*, Y^*),$ $(X_n, Y_n),$ $J \setminus \{p\},$ and $k_p.$
Open all boxes in $\mathcal{P}(\mathbb{R}) \times \{p\} \times (\mathbb{N} \setminus \{k_p\}).$ This determines whether $p \in J.$

If $p \not \in J,$ then open the box $(X^*, p, k_p),$ and guess that the box $(Y^*, p, k_p)$ contains $v(X^*, p, k_p).$
If $p \in J,$ open the box $(X_p, p, k_p)$ and guess that the box $(Y_p, p, k_p)$ contains $v(X_p, p, k_p).$
This completes the description of strategy $\sigma_p.$

Suppose $\sigma_p$ results in an incorrect guess. If $p \not \in J,$ then $i_p>k_p.$ If $p \in J,$ then $j_p > k_p.$ In either case, for all $p' \neq p,$
we have $k_{p'}>k_p.$ Thus, at most one player will fail, so the strategy $\sigma$ is as desired.

\bigskip

\noindent \textbf{5. More box games}

We will briefly consider which further variations of the box game have winning strategies in the absence of choice.
We first note that for any set of players $P$ and set of values $V,$ there is a sufficiently large set of boxes such that
$G(P, B, V)$ is winnable. In particular, the proof of Theorem 2 immediately generalizes to the following:

\begin{mypro}
(ZF) For any $P$ and $V,$ the box game $G(P, \mathcal{P}(V^{P \times \mathbb{N}}), V)$ is winnable.
\end{mypro}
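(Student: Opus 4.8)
The plan is to re-run the proof of Theorem 2 essentially verbatim, after two substitutions: replace $\mathbb{R}$ throughout by $W := V^{P \times \mathbb{N}}$, and replace the player set $\mathbb{N}$ by $P$. We may assume $|P| \ge 2$ and $|V| \ge 2$, the remaining cases being trivial. First I would identify the box set $\mathcal{P}(V^{P \times \mathbb{N}})$ with $\mathcal{P}(W) \times P \times \mathbb{N}$; this is legitimate because a routine ZF cardinal computation shows $|\mathcal{P}(W)| = |\mathcal{P}(W) \times P \times \mathbb{N}|$, the power set $\mathcal{P}(W)$ absorbing the factors $P$ and $\mathbb{N}$ (using $|V| \ge 2$, the inequalities $|P|, |\mathbb{N}| \le |\mathcal{P}(W)|$, the ZF identities $\mathbb{N} \cong \mathbb{N} \sqcup \mathbb{N}$ and $P \times \mathbb{N} \cong \mathbb{N} \sqcup (P \times \mathbb{N})$, and Cantor--Schr\"oder--Bernstein). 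Writing $v \colon \mathcal{P}(W) \times P \times \mathbb{N} \to V$ for the box contents, the key bookkeeping point is that for each $X \in \mathcal{P}(W)$ the function $(n,i) \mapsto v(X,n,i)$ is \emph{literally} an element of $W$, so that $W$ itself now plays the role occupied by $\mathbb{R}^{\mathbb{N}\times\mathbb{N}} \cong \mathbb{R}$ in Theorem 2.

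Next I would set up the equivalence relations, all living on $W$: let $\sim_{\text{col}}$ be ``$h_1, h_2$ agree on all but finitely many columns'' (columns now indexed by $P$), and for each $n \in P$ let $\sim_n$ be ``the restrictions of $h_1, h_2$ to $\{n\}\times\mathbb{N}$ have the same tail,'' with no condition on the other columns; both are readily checked to be equivalence relations on $W$. I would then invoke Lemma 1 in the mildly more general form its proof already yields --- for any set $S$ in place of $\mathbb{R}$, since the recursion $X_\alpha = \bigcup_{\beta<\alpha} f(X_\beta)$ stays inside $\mathcal{P}(S)$ and stabilizes for exactly the same reason --- applied with $S = W$. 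Let $(X^*, Y^*)$ be the canonical pair for $f(X) = [(n,i)\mapsto v(X,n,i)]_{\sim_{\text{col}}}$ and $(X_n, Y_n)$ the canonical pair for $g_n(X) = [(n',i)\mapsto v(X,n',i)]_{\sim_n}$, and define $i_n$, $J = \{n : i_n = -1\}$, $j_n$, and $k_p = \max(\{i_n : n\ne p\}\cup\{j_n : n\in J\setminus\{p\}\})$ exactly as in Theorem 2. This maximum is over a finite set: if $P$ is infinite then $f(X^*) = f(Y^*)$ forces $v(X^*,n,\cdot) = v(Y^*,n,\cdot)$ for all but finitely many $n$, so $J$ is finite and $\{i_n : n \ne p\}$ is finite; if $P$ is finite then $J \subseteq P$ is finite outright.

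The strategy $\sigma_p$ and its verification then transcribe those of Theorem 2 line by line; the only claims worth rechecking are the information-flow ones, which still go through. Opening all boxes with middle coordinate $\ne p$ determines $f$ (because $\sim_{\text{col}}$ ignores one column), all $g_n$ and $(X_n, Y_n)$ for $n \ne p$ (because $g_n$ depends only on column $n$), hence $(X^*, Y^*)$, $J \setminus \{p\}$, and $k_p$; then opening all boxes with middle coordinate $p$ except position $k_p$ determines whether $p \in J$ and determines $g_p$, hence $(X_p, Y_p)$ (because deleting one position changes neither whether two value-streams in a column agree infinitely often nor a tail class). Player $p$ then opens $(X^*, p, k_p)$ or $(X_p, p, k_p)$ according as $p \notin J$ or $p \in J$ and guesses that $(Y^*, p, k_p)$ or $(Y_p, p, k_p)$, respectively, holds the value just seen. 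If this guess is wrong then $i_p > k_p$ (case $p \notin J$) or $j_p > k_p$ (case $p \in J$); in either case the offending quantity lies in the finite set defining $k_{p'}$ for every $p' \ne p$, so $k_{p'} > k_p$ for all $p' \ne p$, and hence no two players fail.

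The one step that is not pure transcription is the observation that the ``within a column'' relation cannot, as it was in Theorem 2, be transported to the box base set along a bijection --- because $V^{\mathbb{N}}$ need not be equinumerous with $W$ --- and so must instead be defined directly on $W$ (the relations $\sim_n$ above) and fed into the general form of Lemma 1. I expect that point, rather than any calculation, to be the only real obstacle; the cardinal-arithmetic identification of the box set, and everything else, is routine in ZF.
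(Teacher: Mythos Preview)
Your proposal is correct and is exactly the generalization the paper has in mind; the paper's own proof consists solely of the remark that ``the proof of Theorem 2 immediately generalizes,'' and you have carried that out faithfully. The one technical adjustment you flag---defining the tail relation $\sim_n$ directly on $W=V^{P\times\mathbb{N}}$ rather than transporting $\sim_{\text{tail}}$ along a bijection $V^{\mathbb{N}}\cong W$ that need not exist---is real and is the right fix, though the paper does not pause to mention it.
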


One remaining question is how many boxes are enough for a choiceless box game paradox, with say three players and the boxes containing bits.
The best known unconditional result is the following:

\begin{mypro}
(ZF) The box game $G([3], \{X \subset \mathbb{R}: |X|<|\mathbb{R}|\}, \{0,1\})$ is winnable.
\end{mypro}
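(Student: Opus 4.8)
The plan is to mimic the strategy of Theorem 2, but with the much smaller box set $\{X \subset \mathbb{R} : |X| < |\mathbb{R}|\}$ in place of $\mathcal{P}(\mathbb{R})$, compensating for the loss of boxes by using only finitely many players and a coarser equivalence relation on the values. Since there are only $3$ players and the boxes hold bits, we need a single-box disagreement datum for each of two players, with the third player's possible failure as the allowed exception. First I would observe that the collection $\mathcal{I} = \{X \subset \mathbb{R} : |X| < |\mathbb{R}|\}$ is closed under unions of subfamilies of size $< \cf(|\mathbb{R}|)$ — and, crucially for the transfinite recursion, the union of a chain $X_\beta$ that is strictly increasing in cardinality must eventually exit $\mathcal{I}$, or stabilize first. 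This is the analogue of the ambient comprehension used in Lemma 1, and it is what forces the recursion $X_\alpha = \bigcup_{\beta<\alpha} g(X_\beta)$ to halt with a genuine repeat; I expect this to be the main obstacle, since $\mathcal{I}$ is not a powerset and one must check the recursion cannot run off the edge of $\mathcal{I}$ without first producing the desired collision.

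Next I would identify the boxes with $\mathcal{I} \times \{1,2\} \times \mathbb{N}$ (discarding the index for player $3$, or rather using a three-way split as in the original solution), so the hidden values form $v : \mathcal{I} \times \{1,2\} \times \mathbb{N} \to \{0,1\}$. Fix $n \in \{1,2\}$ and consider the map $g_n : \mathcal{I} \to \{0,1\}^{\mathbb{N}} / \sim_{\text{tail}}$ sending $X \mapsto [i \mapsto v(X,n,i)]_{\sim_{\text{tail}}}$. The key point is that $\{0,1\}^{\mathbb{N}}/\sim_{\text{tail}}$ is a set in bijection with $\mathbb{R}$, so its cardinality is exactly $|\mathbb{R}|$; I then apply the Lemma-1-style recursion \emph{inside} $\mathcal{I}$: define $X^n_\alpha = \bigcup_{\beta<\alpha} R(g_n(X^n_\beta))$, where $R$ is a fixed injection $\{0,1\}^{\mathbb{N}}/\sim_{\text{tail}} \hookrightarrow \mathbb{R}$ composed with the map taking a real to, say, an initial segment of a wellorder — in other words I must encode an equivalence class as a \emph{small} subset of $\mathbb{R}$ in a way that still lets the recursion detect a repeat. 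The cleanest route is: pick in ZF an injection $e$ of $\{0,1\}^{\mathbb{N}}/\sim_{\text{tail}}$ into $\mathbb{R}$ (this needs no choice, as both sides inject into $\mathbb{R}$ in a definable way), and run the recursion $X^n_\alpha = \bigcup_{\beta<\alpha}\{e(g_n(X^n_\beta))\}$, so each $X^n_\alpha$ has cardinality $\le |\alpha| < |\mathbb{R}|$ as long as $\alpha < |\mathbb{R}|^+$; since the $g_n$-values lie in a set of size $|\mathbb{R}|$, by pigeonhole the sequence $\alpha \mapsto g_n(X^n_\alpha)$ cannot be injective on $|\mathbb{R}|^+$, hence stabilizes at some $\alpha < |\mathbb{R}|^+$, giving a canonical pair $(X_n^*, Y_n^*) \in \mathcal{I}^2$ with $X_n^* \neq Y_n^*$ and $g_n(X_n^*) = g_n(Y_n^*)$.

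With $(X_1^*, Y_1^*)$ and $(X_2^*, Y_2^*)$ in hand, define $j_n$ to be least such that $v(X_n^*, n, j) = v(Y_n^*, n, j)$ for all $j \ge j_n$, and set $k = \max(j_1, j_2)$. The strategy for player $3$ — the designated ``exception'' — is irrelevant; have player $3$ do nothing useful. For player $n \in \{1,2\}$: open every box in $\mathcal{I} \times (\{1,2\}\setminus\{n\}) \times \mathbb{N}$ (this reveals $v$ restricted to the other column, hence $g_{n'}$, hence $(X_{n'}^*, Y_{n'}^*)$ and $j_{n'}$, hence $k$), then open every box $(X, n, i)$ with $i \ne k$, which reveals $g_n$, hence $(X_n^*, Y_n^*)$; finally open $(X_n^*, n, k)$ and guess that box $(Y_n^*, n, k)$ holds $v(X_n^*, n, k)$. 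This guess is wrong only if $j_n > k \ge j_{n'}$ for the other index $n'$, which is impossible, so \emph{both} players $1$ and $2$ guess correctly and only player $3$ can fail. The only nonroutine verification is the stabilization claim of the previous paragraph — that the recursion, constrained to stay within the non-powerset family $\mathcal{I}$, still terminates — and once that is pinned down the rest is a direct transcription of the Theorem 2 argument.
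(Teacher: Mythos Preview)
Your argument has a genuine gap at the step where you ``pick in ZF an injection $e$ of $\{0,1\}^{\mathbb{N}}/\sim_{\text{tail}}$ into $\mathbb{R}$.'' No such injection is available in ZF: the relation $\sim_{\text{tail}}$ is (a copy of) $E_0$, and in any model of ZF in which every set of reals has the Baire property the quotient $2^{\mathbb{N}}/E_0$ does \emph{not} inject into $\mathbb{R}$. (Compose a putative $e$ with the quotient map to get an $E_0$-invariant $F:2^{\mathbb{N}}\to\mathbb{R}$ separating classes; generic ergodicity of $E_0$ forces any Baire-measurable such $F$ to be constant on a comeager set, a contradiction.) Your parenthetical ``both sides inject into $\mathbb{R}$'' conflates the surjection $\mathbb{R}\twoheadrightarrow\{0,1\}^{\mathbb{N}}/\sim_{\text{tail}}$ with an injection in the other direction --- without choice a quotient of $\mathbb{R}$ need not embed back into $\mathbb{R}$. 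This is exactly why the paper splits into cases: when $\mathbb{R}$ is not well-orderable one takes the equivalence classes \emph{themselves} (each canonically countable, hence well-orderable) so that the recursion $X_\alpha=\bigcup_{\beta<\alpha}f(X_\beta)$ stays inside $\mathcal{P}_{\text{WO}}(\mathbb{R})\subset\mathcal{I}$; when $\mathbb{R}$ \emph{is} well-orderable one abandons the recursion and uses the original countable-box strategy (now that tail-class representatives can be chosen). Your single recursion cannot cover both cases, and the device $e$ you introduce to force it is precisely the choice principle the proposition is meant to avoid.

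There is a second, independent error in the strategy itself. You make player~$3$ a dummy and have players $1,2$ guess at position $k=\max(j_1,j_2)$, asserting that player $n$ recovers $k$ from $j_{n'}$ alone. But $k$ also depends on $j_n$, which is determined by column $n$ --- the very column player $n$ has not yet opened --- so player $n$ cannot know which row to leave sealed. If instead one reads $k$ as the player-dependent $k_n=j_{n'}$, then player $n$ fails exactly when $j_n>j_{n'}$, which can occur for one of the two, and together with the idle player~$3$ that makes two failures. The fix is to have all three players act symmetrically (index columns by $[3]$ and let player $p$ use $k_p=\max_{n\neq p}j_n$), exactly as in the original box game and in Theorem~2; then at most one $p$ can satisfy $j_p>k_p$.
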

\noindent This can be verified by casework on whether $\mathbb{R}$ is well-orderable: if it is, countably many boxes suffice by the original box game paradox. Otherwise, $\mathcal{P}_{\text{WO}}(\mathbb{R}):=\{X \subset \mathbb{R}: X \text{ is well-orderable}\} \subset B.$ Modify the proof of Lemma 1
to construct a map which sends every $f: \mathcal{P}_{\text{WO}}(\mathbb{R}) \rightarrow \{0,1\}^{\mathbb{N}} / \sim_{\text{tail}}$ to an $(X, Y)$
such that $X \neq Y$ and $f(X)=f(Y),$ and then proceed as in the proof of Theorem 2.

There are many open questions regarding games with a continuum of boxes, e.g.

\bigskip

\noindent \textbf{Question.} \textit{Does ZF prove that } $G([3], \mathbb{R}, \{0,1\})$ \textit{is winnable?}

\bigskip

The author's dissertation will analyze many more box games and their relationship with various set theoretic axioms.

\bigskip

\noindent \textbf{Acknowledgments}

The author is grateful to Dan Velleman for advice on the presentation of the choiceless box game strategy, and to Stan Wagon and Alan Taylor
for insightful discussions on paradoxical puzzles.

\bigbreak

\noindent \textbf{References}

\bigskip

\noindent [1]  \thinspace C. Hardin, A. Taylor, An Introduction to Infinite Hat Problems, \textit{The} \newline \indent \textit{Math. Intelligencer,} \textbf{30} (2008) 20-25.

\noindent [2] \thinspace A. Kanamori, The Mathematical Import of Zermelo's Well-Ordering \newline \indent Theorem, \textit{Bull. Symb. Logic,} \textbf{3} (1997) 281-311.

\noindent [3] \thinspace D. Madore, Le Docteur No continue ses méfaits, \textit{David Madore's WebLog,} \newline
\indent 2009. http://www.madore.org/~david/weblog/d.2009-05-16.1643.html

\noindent [4] \thinspace G. Muller, The Axiom of Choice is Wrong, \textit{The Everything Seminar,} 2007.
\indent https://cornellmath.wordpress.com/2007/09/13/the-axiom-of-choice-is-wrong/

\noindent [5] \thinspace G. Tomkowicz, S. Wagon, \textit{The Banach-Tarski Paradox,} 2nd ed., New \newline \indent York: Cambridge, 2016.

\noindent [6] \thinspace Predicting Real Numbers, Math. Stack Exchange, 2013. \newline
\indent https://math.stackexchange.com/questions/371184/predicting-real-numbers

\noindent [7] \thinspace Probabilities in a riddle involving axiom of choice, MathOverflow, 2013. \newline
\indent https://mathoverflow.net/questions/151286/probabilities-in-a-riddle-involving-\indent axiom-of-choice

\bigskip

\noindent \textit{Email address:} eglazer@math.harvard.edu

\end{document}